\documentclass[a4paper,12pt,intlimits,oneside]{amsart}
\usepackage{amsmath}
\usepackage{amssymb}
\usepackage{amsfonts}
\usepackage{amsthm,amscd}

\numberwithin{equation}{section}
\newtheorem{theorem}{Theorem}[section]

\newtheorem{definition}[theorem]{Definition}

\newtheorem{lemma}[theorem]{Lemma}

\newtheorem{proposition}[theorem]{Proposition}
\newtheorem{remark}[theorem]{Remark}

\newcommand{\comment}[1]{}

\numberwithin{equation}{section}

\def\lsim{\raisebox{-1ex}{$~\stackrel{\textstyle <}{\sim}~$}}

\theoremstyle{definition}

\begin{document}
\title []{Boundedness of the Hardy-Littlewood maximal operator on generalized Fofana spaces}

\author[P. Nagacy]{Pokou Nagacy}
\address{Laboratoire des Sciences et Technologies de l'Environnement, UFR Environnement, Universit\'e Jean Lorougnon
GUEDE, BP 150 Daloa, C\^ote d'Ivoire}
\email{{pokounagacy@yahoo.com}}

\author[B. A. Kpata]{B\'erenger Akon Kpata}
\address{Laboratoire de Math\'ematiques et Informatique, UFR Sciences Fondamentales et
Appliquées,
 Université Nangui Abrogoua,
 02 BP 801 Abidjan 02,  C\^ote d'Ivoire}
\email{{kpata\_akon@yahoo.fr}}
\author[N. Diarra]{Nouffou Diarra}
\address{Laboratoire de Math\'ematiques et Applications, UFR Math\'ematiques et Informatique,
 Universit\'e F\'elix
 Houphou\"et-Boigny,
 22 BP 582 Abidjan 22,  C\^ote d'Ivoire}
\email{{nouffoud@yahoo.fr}}

 \renewcommand{\thefootnote}{}
\footnote{\emph{Corresponding author}: Pokou Nagacy}

\footnote{\emph{Email address:} pokounagacy@yahoo.com}

\footnote{2020 \emph{Mathematics Subject Classification}: 42B25, 42B35.}

\footnote{\emph{Key words and phrases}: Amalgam spaces, generalized Fofana spaces, generalized Morrey spaces, Hardy-Littlewood maximal operator}

\renewcommand{\thefootnote}{\arabic{footnote}}
\setcounter{footnote}{0}

\begin{abstract}
We introduce generalized Fofana spaces and we give some of their basic properties. These spaces are a kind of generalization of generalized Morrey spaces. As application, we establish the boundedness of the Hardy-Littlewood maximal operator on these new spaces.
\end{abstract}

\maketitle

\section{Introduction}
In this paper, the classical Lebesgue space on the Euclidean space $\mathbb{R}^{d}$ is denoted by $L^{p}(\mathbb{R}^{d})$ and $\|\cdot \|_{p} $ stands for its usual norm. For any measurable subset $A$ of $\mathbb{R}^{d}$, we denote by $\vert A \vert$ and $\chi_{A}$ its Lebesgue measure and its characteristic function, respectively. For any $x \in \mathbb{R}$ and for any real number $r>0$, $B(x,\, r)$ stands for the ball centered in $x$ with radius $r$. We denote by $ L^{0}(\mathbb{R}^{d}) $, the complex vector space of equivalent classes (modulo equality Lebesgue almost everywhere) of Lebesgue measurable complex-valued functions on $ \mathbb{R}^{d}$.

The Hardy-Littlewood maximal opertor $M$ is defined, for all locally integrable functions $f$ on $\mathbb{R}^{d}$, by
$$
 Mf(x)=\sup_{r>0}\frac{1}{\vert B(x,r)\vert}\int_{B(x,r)}\left|f(y)\right|dy,\;\;\;\;\; x\in \mathbb{R}^{d}.
$$
Its boundedness property on Banach spaces, including Lebesgue spaces, Morrey and generalized Morrey spaces, and Fofana spaces, was widely investigated (see, for instance \cite{CF}, \cite{GF},  \cite{JF}, \cite{Sa-Faz-Ha}, \cite{EN}, \cite{S}). Although the Hardy-Littlewood maximal function is a classical tool in harmonic analysis, it has recently found applications in the studies of Sobolev functions and partial differential equations (see \cite{BH} and \cite{L}).

Let us recall that the classical Morrey spaces $ \mathcal{M}^{q, \lambda}(\mathbb{R}^{d}) \: (0\leq \lambda \leq d, 1\leq q \leq \infty)$ were introduced in \cite{Mo} by C.B. Morrey. They are generalizations of Lebesgue spaces and are related to regularity results for solutions of certain problems in partial differential equations and integral operators (see \cite{Sa-Faz-Ha} and the references therein). Morrey spaces also belong to the larger class of Fofana spaces $\left(L^{q},L^{p}\right)^{\alpha}(\mathbb{R}^d)$ $ (1 \leq q \leq \alpha \leq p \leq \infty) $ defined as follows:
$$(L^{q},L^{p})^{\alpha}(\mathbb{R}^{d})= \left\{
      \begin{array}{ll}
      f \in L^{0}(\mathbb{R}^{d}) : \quad  \Vert f \Vert_{q,p, \alpha } < \infty
      \end{array}
      \right\},$$
 where
 $$ \Vert f \Vert_{q,p, \alpha }= r^{d(\frac{1}{\alpha}-\frac{1}{q}-\frac{1}{p})}\:_{r}\left\|f\right\|_{q,p},$$
 with 
\begin{equation}
 _{r}\left\|f\right\|_{q,p} = \left\|\left[\int_{\mathbb R^{d}}\vert f\chi_{B(y,r)}\vert^{q}(x)dx\right]^{\frac{1}{q}}\right\|_p . \label{M}
 \end{equation} 
Here, the $ L^{p}(\mathbb{R}^{d})-$norm is taken with respect to the variable $ y$ and we adopt the convention $\frac{1}{\infty} =0$. The spaces $\left(L^{q},L^{p}\right)^{\alpha}(\mathbb{R}^d)$ were first introduced by Fofana with ''discrete'' norms, which are denoted in this paper by $ \left\|\cdot\right\|_{(L^{q},l^{p})^{\alpha}(\mathbb{R}^{d})} $ (equivalent to $ \Vert \cdot \Vert_{q,p, \alpha}$ ), see \cite{IF1}. We recall that for any locally integrable function $f$,  
 $$\left\| f \right\|_{(L^{q},l^{p})^{\alpha}(\mathbb{R}^{d})} = r^{d(\frac{1}{\alpha}-\frac{1}{q})}\:_{r}\widetilde{\left\|f\right\|}_{q,p},$$
where 
$$_{r}\widetilde{\left\|f\right\|}_{q,p}= \left\|\left\{  \left\|f\chi_{Q^{r}_{k}}\right\|_{q} \right\}_{k \in \mathbb{Z}^{d} }\right\|_{\ell^{p}},  $$
with $k=(k_{1},\, ...,\,k_{d})\in \mathbb{Z}^{d}$ and
 $ Q^{r}_{k} = \prod_{i=1}^{d}  [rk_{i}, r(k_{i}+1)) $.\\
 It is proved in \cite{IF1} that the following assertions hold:\\
 $ \bullet $ $\left(\left(L^{q},L^{p}\right)^{\alpha}(\mathbb{R}^d),\, \Vert \cdot \Vert_{q,p, \alpha }\right)$ is a Banach space; \\
 $ \bullet $ for $1 \leq q \leq \alpha < \infty $ fixed, $\{\left(L^{q},L^{p}\right)^{\alpha}(\mathbb{R}^d)\; :\, \alpha\leq p \leq \infty \}$ is a non-decreasing family (in the sense of inclusion) of distinct Banach spaces whose minimal and maximal elements are the Lebesgue space $L^{\alpha}(\mathbb{R}^d) = \left(L^{\alpha},L^{p}\right)^{\alpha}(\mathbb{R}^d)= \left(L^{q},L^{\alpha}\right)^{\alpha}(\mathbb{R}^d) $ and the classical Morrey space $ \left(L^{q},L^{\infty}\right)^{\alpha}(\mathbb{R}^d) = \mathcal{M}_{q, \lambda}(\mathbb{R}^{d})$, respectively, where $\lambda= \frac{1}{q} -\frac{1}{\alpha}$.\\
The spaces $ \left(L^{q},L^{p}\right)^{\alpha}(\mathbb{R}^d) $ have been successfully used in extending several important results known in the setting of Lebesgue and Morrey spaces (see, for example \cite{DFS}, \cite{JF2}, \cite{JF}, \cite{KFK}, \cite{MF}).

In recent decades, many works have been devoted to the concept of generalized Morrey spaces and their applications (see, for example \cite{Sa} and the references therein). The generalized Morrey spaces $\mathcal{M}_{q,\, \phi}$, where $1\leq q \leq \infty $ and $\phi : (0,\,\infty) \to (0, \,\infty)$ is a function satisfying certain conditions, were introduced by Nakai in \cite{EN} to investigate the boundedness properties of some classical operators of harmonic analysis.

In the present work, we introduce a kind of generalization of Nakai's generalized Morrey spaces, called generalized Fofana spaces. We give some of their basic properties. As application, we establish the boundedness of the Hardy-Littlewood maximal operator in these new spaces.

This paper is organized as follows. In Section 2, we define generalized Fofana spaces and we give some of their basic properties. In Section 3, we state and prove the boundedness property of the Hardy-Littlewood maximal operator on generalized Fofana spaces.\\
For $A,B>0$, the notation $A \lsim B$ means that there exists a constant $C>0$ such that $A \leq C B$, where $C$ depends only on the parameters of importance. If $ A \lsim B$ and $B \lsim A$, then we write $ A\approx  B$.

\section{Generalized Fofana spaces}
 Let $1\leq q,\, p \leq\infty$ and $r>0$. The amalgam space $(L^{q},\, l^{p})_{r}(\mathbb{R}^{d})$ is defined as the set of all functions $f \in L^{0}(\mathbb{R}^{d})$ satisfying $ \: _{r}\widetilde{\left\|f\right\|}_{q,p}<\infty$. \\
  Amalgam spaces were introduced by N. Wiener \cite{W} in 1926. But its systematic study goes back to the work
  of Holland \cite{FH}. For more informations about amalgam spaces, we refer to \cite{JF}, \cite{FS2} and the references cited therein. We recall some properties of these spaces in the following proposition.
 \begin{proposition} \label{prop 3}
Let $1\leq q,\, p \leq\infty$ and $r>0$. Then, the following assertions
hold.
\begin{enumerate}
\item  There exists a constant $ C>0 $ such that
$$C^{-1}\:r^{\frac{d}{p}}\:_{r}\widetilde{\left\|f\right\|}_{q,p}\leq \: _{r}\left\|f\right\|_{q,p} \leq C r^{\frac{d}{p}}\:_{r}\widetilde{\left\|f\right\|}_{q,p},\;\;\;\;\; f \in L^{0}(\mathbb{R}^{d}).$$
\item  $((L^{q},\, l^{p})_{r}(\mathbb{R}^{d}),   \: _{r}\widetilde{\left\|\cdot  \right\|}_{q,p}) $ is a complex Banach space equals to \\ $((L^{q},\, L^{p})_{r}(\mathbb{R}^{d}),   \: _{r}\left\|\cdot  \right\|_{q,p}) .$ 
\item If  $1\leq q_{1}\leq q_{2}\leq \infty$ and $1\leq p \leq \infty$, then for all $f \in L^{0}(\mathbb{R}^{d})$,
 $$ _{r}\widetilde{\left\|f\right\|}_{q_{1},p} \leq \: _{r}\widetilde{\left\|f\right\|}_{q_{2},p} r^{d({\frac{1}{q_{1}} - \frac{1}{q_{2}}})}.$$
\item If $1\leq p_{1}\leq p_{2}\leq\infty$ and $1\leq q \leq \infty$, then for all $f \in L^{0}(\mathbb{R}^{d})$, 
  $$_{r}\widetilde{\left\| f \right\|}_{q,p_{2}} \leq 2^{d} \: _{r}\widetilde{\left\|f\right\|}_{q,p_{1}}.$$
\end{enumerate}
 \end{proposition}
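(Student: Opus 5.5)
The plan is to prove the four assertions separately. Everything rests on one geometric fact: each ball $B(y,r)$ meets at most $3^{d}$ (a fixed number of) cubes $Q^{r}_{k}$. Conversely, each cube $Q^{r}_{k}$ is contained in $B(y,r)$ for all $y$ in a small ball around the center of $Q^{r}_{k}$, of radius comparable to $r$. We may take $r\,(1-\sqrt{d}/2)$ when that is positive; in general we cover $Q^{r}_{k}$ by a fixed number $N_{d}$ of subcubes of side $r/(2\sqrt d)$.

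For (1) with $p<\infty$, we write
$$_{r}\|f\|_{q,p}^{p}=\sum_{k}\int_{Q^{r}_{k}}\|f\chi_{B(y,r)}\|_{q}^{p}\,dy.$$
For $y\in Q^{r}_{k}$, the ball $B(y,r)$ is contained in the union of the cubes $Q^{r}_{k+j}$ with $|j|_{\infty}\le 1$. Hence
$$\|f\chi_{B(y,r)}\|_{q}\le \sum_{|j|_\infty\le1}\|f\chi_{Q^{r}_{k+j}}\|_{q}.$$
Integrating over $Q^{r}_{k}$, which has measure $r^{d}$, and using $(\sum_{j} a_j)^p\le 3^{d(p-1)}\sum_{j} a_j^p$, gives the upper bound $_{r}\|f\|_{q,p}\le C r^{d/p}\,{}_{r}\widetilde{\|f\|}_{q,p}$.

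For the lower bound, split $Q^{r}_{k}$ into $N_{d}$ subcubes $P$ of side $r/(2\sqrt d)$. For $y\in P$ we have $P\subset B(y,r)$, so $\|f\chi_{P}\|_{q}\le \|f\chi_{B(y,r)}\|_{q}$. Since $|P|\approx r^{d}$, we get
$$\|f\chi_{P}\|_{q}^{p}\,r^{d}\lsim\int_{P}\|f\chi_{B(y,r)}\|_{q}^{p}\,dy.$$
Summing over $P$ and using $\|f\chi_{Q^{r}_{k}}\|_{q}\le\sum_{P}\|f\chi_P\|_{q}$ yields the reverse inequality. The case $p=\infty$ is identical with suprema in place of sums; essential suprema cause no trouble because the relevant sets of $y$ have positive measure.

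Assertion (2) follows from (1) once we know $(L^{q},l^{p})_{r}$ is a Banach space. Completeness is the standard argument: take an absolutely convergent series $\sum f_n$. On each cube, $\sum\|f_n\chi_{Q_k}\|_{q}<\infty$, so the series converges a.e. and in $L^{q}(Q_k)$. Minkowski's inequality in the mixed norm $\ell^{p}(L^{q})$ then bounds the norm of the tail, so the tail tends to zero. Equivalence of norms transfers completeness, and the identification of the two spaces, to the continuous version.

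For (3), apply Hölder on each cube: $\|f\chi_{Q^{r}_{k}}\|_{q_1}\le |Q^{r}_{k}|^{1/q_1-1/q_2}\|f\chi_{Q^{r}_{k}}\|_{q_2}$, with $|Q^{r}_{k}|=r^{d}$, and then take $\ell^{p}$ norms.

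For (4), use the embedding $\ell^{p_1}\subset\ell^{p_2}$ with norm $\le1$. This gives the inequality even with constant $1$, and in particular with $2^{d}$.

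The main obstacle is the lower bound in (1): a cube $Q^{r}_{k}$ of side $r$ need not fit in any ball of radius $r$ when $d\ge4$. Subdividing into the fixed number $N_{d}$ of small subcubes handles this, at the cost of a dimensional constant.
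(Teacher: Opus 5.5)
The paper does not prove this proposition. It recalls these as standard facts about Wiener amalgam spaces, with references to Holland, Fournier--Stewart and Fofana, so there is no argument of the authors to compare with. Your proposal is a correct, self-contained proof.

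In (1), the neighbouring-cube argument gives ${}_{r}\|f\|_{q,p}\le 3^{d}r^{d/p}\,{}_{r}\widetilde{\|f\|}_{q,p}$. The subcube argument gives $r^{d/p}\,{}_{r}\widetilde{\|f\|}_{q,p}\le N_{d}\,{}_{r}\|f\|_{q,p}$. Both constants depend only on $d$, not on $r$, $p$ or $q$, and this is worth stating explicitly. The statement leaves the dependence of $C$ on $r$ ambiguous, but the paper later takes suprema over $r>0$: in the equivalence $\|f\|_{(L^{q},L^{p})^{\phi}(\mathbb{R}^{d})}\approx\widetilde{\|f\|}_{(L^{q},L^{p})^{\phi}(\mathbb{R}^{d})}$ and in Proposition~\ref{prop 1}. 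That step is only legitimate with an $r$-independent constant, which your argument supplies.

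Items (2)--(4) follow the standard routes: absolutely summable series plus transfer via (1), H\"older on cubes of measure $r^{d}$, and the norm-one embedding $\ell^{p_1}\hookrightarrow\ell^{p_2}$. You are right that (4) holds with constant $1$, so the factor $2^{d}$ in the statement is simply not sharp.

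Three small repairs are needed.
\begin{enumerate}
\item Since $2\sqrt d$ is usually not an integer, a cube of side $r$ cannot be split into cubes of side exactly $r/(2\sqrt d)$. Take side $r/m$ with $m=\lceil 2\sqrt d\,\rceil$ instead; the diameter is still at most $r/2$, so $N_{d}=m^{d}$ and $|P|=r^{d}m^{-d}$.
\item Your opening sentence, that $Q^{r}_{k}\subset B(y,r)$ for all $y$ near the centre, is false for $d\ge 4$, as you observe yourself. Replace it with the subcube statement.
\item You tacitly use that $y\mapsto\|f\chi_{B(y,r)}\|_{q}$ is measurable. This holds because the map is lower semicontinuous, the balls being open; the paper takes it for granted as well.
\end{enumerate}
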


\begin{definition}
Let $1\leq q, p\leq \infty$. We denote by $\mathcal{G}_{q,p}$ the set of measurable functions $\phi : (0,\,\infty) \to (0, \,\infty)$ that satisfy the following assertions: 
 \begin{itemize}
\item $ t \mapsto t^{\frac{d}{p}} \phi(t) $ is almost decreasing (there exists $ C > 0$ such that $ r \leq s \implies s^{\frac{d}{p}}\phi(s)  \leq C r^{\frac{d}{p}}\phi(r) $);
\item $ t \mapsto t^{\frac{d}{q}} \phi(t) $ is almost increasing (there exists $ C > 0$ such that $ r \leq s \implies r^{\frac{d}{q}}\phi(r)  \leq C s^{\frac{d}{q}}\phi(s)$).
 \end{itemize}
\end{definition}

\begin{remark}
Note that:
\begin{enumerate}
\item if $\phi \in \mathcal{G}_{q,p}$ then the following 
doubling condition is fulfilled:
 \begin{equation}\label{eq 1}
 \exists \: C>0, \quad \quad  \frac{1}{2} \leq \frac{r}{s} \leq 2 \implies \frac{1}{C} \leq \frac{\phi (r)}{\phi(s)} \leq C;
 \end{equation}
 \item if $ q_{1}\leq q_{2}$ then $ \mathcal{G}_{q_{2},p} \subset \mathcal{G}_{q_{1},p}; $
 \item if $ p_{1}\leq p_{2}$ then $ \mathcal{G}_{q,p_{1}} \subset \mathcal{G}_{q,p_{2}}.$
\end{enumerate}
\end{remark}

 \begin{definition}
 Let $1\leq q \leq p\leq\infty$ and let $\phi \in \mathcal{G}_{q,p}$. The generalized Fofana space $ (L^{q}, L^{p})^{\phi} (\mathbb{R}^{d}) $ is defined as 
   $$(L^{q},L^{p})^{\phi}(\mathbb{R}^{d})= \left\{
     \begin{array}{ll}
     f \in L^{0}(\mathbb{R}^{d}) : \quad  \Vert f \Vert_{(L^{q},L^{p})^{\phi}(\mathbb{R}^{d}) } < \infty
     \end{array}
     \right\},$$
  where 
  \begin{equation*}
  \left\|f\right\|_{(L^{q},L^{p})^{\phi}(\mathbb{R}^{d})}=\sup_{r>0} \frac{1}{\phi(r)} r^{d(-\frac{1}{q}-\frac{1}{p})}\: _{r}\left\|f\right\|_{q,p}.
  \end{equation*}
 \end{definition}
 \begin{remark}
Note that:
\begin{enumerate}
\item the space $(L^{q}, L^{p})^{\phi} (\mathbb{R}^{d})$ is a complex vector subspace of $ L^{0}(\mathbb{R}^{d})$;
\item the map $ L^{0}(\mathbb{R}^{d}) \ni f\mapsto\ \left\|f\right\|_{(L^{q},L^{p})^{\phi}(\mathbb{R}^{d})}$ defines a norm on $(L^{q},L^{p})^{\phi}(\mathbb{R}^{d})$;
\item according to Proposition \ref{prop 3} (1),
$$ \left\|f\right\|_{(L^{q},L^{p})^{\phi}(\mathbb{R}^{d})}\approx \widetilde{\left\|f\right\|}_{(L^{q},L^{p})^{\phi}(\mathbb{R}^{d})}, \quad \quad f \in (L^{q}, L^{p})^{\phi} (\mathbb{R}^{d}),$$
where $$\widetilde{\left\|f\right\|}_{(L^{q},L^{p})^{\phi}(\mathbb{R}^{d})}=\sup_{r>0} \frac{1}{\phi(r)} r^{-\frac{d}{q}}\:_{r}\widetilde{\left\|f\right\|}_{q,p};$$
\item if  $1\leq q \leq \alpha \leq  p $ and $\phi (r) = r^{-\frac{d}{\alpha}}$, then $ (L^{q}, L^{p})^{\phi} (\mathbb{R}^{d})$ is the Fofana space $(L^{q}, L^{p})^{\alpha} (\mathbb{R}^{d})$;
\item for $1\leq q <\infty$, the space   $(L^{q},L^{\infty})^{\phi}(\mathbb{R}^{d})$ is the generalized Morrey space $\mathcal{M}_{q, \phi}$ defined in \cite{EN}.
\end{enumerate}
\end{remark}

\begin{proposition}
 Let $1\leq q \leq p\leq \infty$ and $\phi \in \mathcal{G}_{q,p}$. Then \\ $\left((L^{q},L^{p})^{\phi}(\mathbb{R}^{d}),  \left\|\cdot\right\|_{(L^{q},L^{p})^{\phi}(\mathbb{R}^{d})}\right)$, is a complex Banach space.
 \end{proposition}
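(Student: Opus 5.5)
Since the preceding remark already records that $\|\cdot\|_{(L^{q},L^{p})^{\phi}(\mathbb{R}^{d})}$ is a norm on the vector space $(L^{q},L^{p})^{\phi}(\mathbb{R}^{d})$, only completeness remains. I would use the criterion that a normed space is complete if and only if every absolutely convergent series converges in norm. Alternatively, one can take a Cauchy sequence and build its limit from the completeness of the amalgam spaces given by Proposition \ref{prop 3} (2). I would take the Cauchy-sequence route and rely on Proposition \ref{prop 3} (2) together with a Fatou-type lower semicontinuity of $_{r}\|\cdot\|_{q,p}$.

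Let $(f_n)$ be Cauchy in $(L^{q},L^{p})^{\phi}(\mathbb{R}^{d})$. For each fixed $r>0$, the definition of the norm gives
$$_{r}\|f_n-f_m\|_{q,p}\le \phi(r)\,r^{d(\frac1q+\frac1p)}\|f_n-f_m\|_{(L^{q},L^{p})^{\phi}(\mathbb{R}^{d})}.$$
So $(f_n)$ is Cauchy in the Banach space $(L^{q},L^{p})_{r}(\mathbb{R}^{d})$, and by Proposition \ref{prop 3} (2) it converges there to some $f^{(r)}$.

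Next I need a single limit $f$ independent of $r$. Convergence in $_{r}\widetilde{\|\cdot\|}_{q,p}$ implies $\|(f_n-f^{(r)})\chi_{Q^r_k}\|_q\to0$ for each cube, since an $\ell^p$ norm dominates each coordinate. Hence $f_n\to f^{(r)}$ in $L^q$ on every compact set, and therefore almost everywhere along a subsequence. Two local-$L^q$ limits coincide almost everywhere, so all the $f^{(r)}$ equal one function $f\in L^{0}(\mathbb{R}^{d})$.

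Finally I would show $f\in(L^{q},L^{p})^{\phi}(\mathbb{R}^{d})$ and $\|f_n-f\|\to0$. Given $\varepsilon>0$, choose $N$ with $\|f_n-f_m\|_{(L^{q},L^{p})^{\phi}}<\varepsilon$ for $n,m\ge N$. Fix $r>0$ and $n\ge N$. Since $f_m\to f$ in $(L^{q},L^{p})_{r}$, the reverse triangle inequality gives
$$_{r}\|f_n-f\|_{q,p}=\lim_{m\to\infty}{}_{r}\|f_n-f_m\|_{q,p}\le \varepsilon\,\phi(r)\,r^{d(\frac1q+\frac1p)}.$$
Dividing by $\phi(r)r^{d(\frac1q+\frac1p)}$ and taking the supremum over $r$ yields $\|f_n-f\|_{(L^{q},L^{p})^{\phi}}\le\varepsilon$. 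Then $f=f_N+(f-f_N)$ lies in the space, and $f_n\to f$ in norm.

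The main obstacle is the identification step, namely that the limits $f^{(r)}$ for different $r$ agree. This is handled by the local $L^q$ convergence argument above. Passing to the limit inside the norm requires no Fatou lemma, because we already have norm convergence in each amalgam space. The conditions $\phi\in\mathcal{G}_{q,p}$ are not needed for completeness itself.
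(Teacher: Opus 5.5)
Your proof is correct and follows essentially the same route as the paper: you show the sequence is Cauchy in each amalgam space $(L^{q},L^{p})_{r}(\mathbb{R}^{d})$, get a limit from Proposition \ref{prop 3} (2), and then let $m\to\infty$ in ${}_{r}\|f_n-f_m\|_{q,p}$ before taking the supremum over $r$. Your explicit check that the limits $f^{(r)}$ coincide a.e. (via local $L^{q}$ convergence on the cubes $Q^{r}_{k}$) supplies a step the paper leaves implicit when it uses a single $f$ for all $r$, and your $f=f_N+(f-f_N)$ argument is a slightly shorter substitute for the paper's detour through the limit $M$ of the norms.
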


\begin{proof}
Let $ (f_{m})_{m \in \mathbb{N}} $ be a Cauchy sequence of elements of $ (L^{q},L^{p})^{\phi}(\mathbb{R}^{d})$. \\ Fix $\epsilon > 0$. Then there exists an integer $ N $ such that for all integers $ m $ and $ n $ we have
 	$$  m,n > N \implies \Vert f_{m} - f_{n} \Vert_{(L^{q},L^{p})^{\phi}(\mathbb{R}^{d})}  < \epsilon. $$
 This implies that for $ r $ fixed and for $  m,n > N $,
 		$$\frac{1}{\phi(r)} r^{d(-\frac{1}{q}-\frac{1}{p})}\: _{r}\Vert f_{m} - f_{n} \Vert_{q,p}  < \epsilon. $$
 Hence, $ (f_{m})_{m \in \mathbb{N}} $ is a Cauchy sequence of $ \left(L^{q},L^{p}\right)_{r}(\mathbb{R}^{d}) $.	As $ \left(L^{q},L^{p}\right)_{r}(\mathbb{R}^{d}) $ is a complex
 Banach space, $ (f_{m})_{m \in \mathbb{N}} $ converges to $f \in (L^{q}, L^{p})_{r} (\mathbb{R}^{d})$.  In addition, for all $ m,n > N $ we
 have
 		$$ \vert \Vert f_{m} \Vert_{(L^{q},L^{p})^{\phi}(\mathbb{R}^{d})} - \Vert f_{n} \Vert_{(L^{q},L^{p})^{\phi}(\mathbb{R}^{d})} \vert \leq \Vert f_{m} - f_{n} \Vert_{(L^{q},L^{p})^{\phi}(\mathbb{R}^{d})} < \epsilon.$$
 Therefore, $(\Vert f_{n} \Vert_{(L^{q},L^{p})^{\phi}(\mathbb{R}^{d})})_{n \in \mathbb{N}}$ is a Cauchy sequence of $ \mathbb{R}$. Let us put $M \ge 0$ its limit. It follows that for $ m > N $,
 	\begin{eqnarray*}
 & &	\frac{1}{\phi(r)} r^{d(-\frac{1}{q}-\frac{1}{p})}\: _{r}\left\|f\right\|_{q,p}\\
 		& \leq & \Vert f_{m} \Vert_{(L^{q},L^{p})^{\phi}(\mathbb{R}^{d})} + \frac{1}{\phi(r)} r^{d(-\frac{1}{q}-\frac{1}{p})} \:_{r}\Vert f - f_{m} \Vert_{q,p}.
 	\end{eqnarray*}
 	By letting $ m $ go to $ \infty $, we get
  $$ \frac{1}{\phi(r)} r^{d(-\frac{1}{q}-\frac{1}{p})}\ _{r}\left\|f\right\|_{q,p} \leq M.$$
  Hence, $\Vert f \Vert_{(L^{q},L^{p})^{\phi}(\mathbb{R}^{d})} \leq M$ and $f \in  (L^{q},L^{p})^{\phi}(\mathbb{R}^{d})$. \\
 Let $ m $ be an integer such that $m>N$. For all $r>0 $ and for all $l \in \mathbb{N}$, we have
 \begin{eqnarray*}
 \frac{1}{\phi(r)} r^{d(-\frac{1}{q}-\frac{1}{p})} \:_{r}\Vert f_{m} -f \Vert_{q,p}
 & \leq &  \Vert f_{m} - f_{m+l}  \Vert_{(L^{q},L^{p})^{\phi}(\mathbb{R}^{d})} \\
 & & + \frac{1}{\phi(r)} r^{d(-\frac{1}{q}-\frac{1}{p})} \:_{r}\Vert f_{m+l} -f \Vert_{q,p}\\
 &\leq& \epsilon + \frac{1}{\phi(r)} r^{d(-\frac{1}{q}-\frac{1}{p})} \:_{r}\Vert f_{m+l} -f \Vert_{q,p}.
 \end{eqnarray*}
 Since	
 	$$ \lim\limits_{l \rightarrow	 + \infty}  \frac{1}{\phi(r)} r^{d(-\frac{1}{q}-\frac{1}{p})} \:_{r}\Vert f_{m+l} -f \Vert_{q,p} = 0,$$ we have
 	 $$ \Vert f_{m} - f \Vert_{(L^{q},L^{p})^{\phi}(\mathbb{R}^{d})}  \leq \epsilon .$$	
 This ends the proof.
\end{proof}

 The family of spaces $ (L^{q},L^{p})^{\phi}(\mathbb{R}^{d}) $ is decreasing with respect to the $ q $ power
 and increasing with respect to the $ p $ power. More precisely, we have the following proposition.
 \begin{proposition}\label{prop 1}
 Let  $ f \in L^{0}(\mathbb{R}^{d}). $    
 \begin{enumerate}
 \item If $1\leq q_{1}\leq q_{2} \leq p \leq\infty$ and $ \phi \in \mathcal{G}_{q_{2},p} $ then 
 $$\left\| f \right\|_{(L^{q_{1}},L^{p})^{\phi }(\mathbb{R}^{d})} \lesssim \left\|f\right\|_{(L^{q_{2}},L^{p})^{\phi }(\mathbb{R}^{d})},$$ and consequently, $(L^{q_{2}},L^{p})^{\phi}(\mathbb{R}^{d})\subset (L^{q_{1}},L^{p})^{\phi}(\mathbb{R}^{d}).$
 \item If $1\leq q\leq p_{1} \leq p_{2} \leq\infty$ and $ \phi \in \mathcal{G}_{q,p_{1}} $ then 
 $$\left\| f \right\|_{(L^{q},L^{p_{2}})^{\phi}(\mathbb{R}^{d})} \lesssim \left\|f\right\|_{(L^{q},L^{p_{1}})^{\phi}(\mathbb{R}^{d})}$$ and consequently, $(L^{q},L^{p_{1}})^{\phi}(\mathbb{R}^{d})\subset (L^{q},L^{p_{2}})^{\phi}(\mathbb{R}^{d}).$
 \end{enumerate}
 \end{proposition}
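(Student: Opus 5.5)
Both parts reduce to the discrete norm $\widetilde{\left\|\cdot\right\|}_{(L^{q},L^{p})^{\phi}(\mathbb{R}^{d})}$ from the third item of the remark following the definition of generalized Fofana spaces. That remark, which relies on Proposition \ref{prop 3} (1), shows this norm is equivalent to $\left\|\cdot\right\|_{(L^{q},L^{p})^{\phi}(\mathbb{R}^{d})}$, with constants depending only on $d,q,p$. It therefore suffices to prove the two inequalities for the expressions $\sup_{r>0}\phi(r)^{-1}r^{-d/q}\,{}_{r}\widetilde{\left\|f\right\|}_{q,p}$, working at a fixed $r>0$ and then taking the supremum. Since the remark is stated for $f$ in the space, I would first note that it actually holds for every $f\in L^{0}(\mathbb{R}^{d})$. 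This is because Proposition \ref{prop 3} (1) is valid for every $f\in L^0$, with both sides allowed to equal $+\infty$.

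For part (1), I fix $r>0$ and apply Proposition \ref{prop 3} (3) with $q_1\le q_2$:
$$\frac{1}{\phi(r)}r^{-\frac{d}{q_1}}\,{}_{r}\widetilde{\left\|f\right\|}_{q_1,p}\le \frac{1}{\phi(r)}r^{-\frac{d}{q_1}}r^{d(\frac{1}{q_1}-\frac{1}{q_2})}\,{}_{r}\widetilde{\left\|f\right\|}_{q_2,p}=\frac{1}{\phi(r)}r^{-\frac{d}{q_2}}\,{}_{r}\widetilde{\left\|f\right\|}_{q_2,p}.$$
The powers of $r$ cancel exactly. Taking the supremum over $r$ gives the discrete inequality with constant $1$, and the norm equivalences then give the claim. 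Here the hypothesis $\phi\in\mathcal{G}_{q_2,p}\subset\mathcal{G}_{q_1,p}$, from the earlier remark on the classes $\mathcal{G}_{q,p}$, is needed only so that both spaces are defined.

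For part (2), I use Proposition \ref{prop 3} (4): ${}_{r}\widetilde{\left\|f\right\|}_{q,p_2}\le 2^{d}\,{}_{r}\widetilde{\left\|f\right\|}_{q,p_1}$. No power of $r$ appears in this estimate, and the normalizing factor $r^{-d/q}$ in the discrete norm does not depend on $p$. Hence the discrete norms satisfy $\widetilde{\left\|f\right\|}_{(L^{q},L^{p_2})^{\phi}}\le 2^{d}\,\widetilde{\left\|f\right\|}_{(L^{q},L^{p_1})^{\phi}}$, and the equivalence of norms finishes the argument. Again, $\mathcal{G}_{q,p_1}\subset\mathcal{G}_{q,p_2}$ guarantees that both spaces make sense. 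The inclusions of spaces in both parts follow immediately from the norm inequalities.

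I do not expect a genuine obstacle. The one point needing care is checking that the equivalence constants in Proposition \ref{prop 3} (1) are independent of $r$, up to the factor $r^{d/p}$, so that they pass through the supremum over $r$. In the continuous norm this factor is exactly absorbed by the $r^{-d/p}$ normalization, which is why the discrete norm has no $p$-dependent power of $r$.
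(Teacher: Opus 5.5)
Your proposal is correct and follows the paper's argument: the paper also fixes $r>0$ and uses Proposition \ref{prop 3} (parts (1) and (3) for the first claim, parts (1) and (4) for the second) to compare the normalized quantities $r^{d(-\frac{1}{q}-\frac{1}{p})}\,{}_{r}\|f\|_{q,p}$, then multiplies by $\frac{1}{\phi(r)}$ and takes the supremum over $r$. The only difference is that you pass explicitly through the discrete norm $\widetilde{\|f\|}_{(L^{q},L^{p})^{\phi}(\mathbb{R}^{d})}$, and your two side remarks (the equivalence holds for all $f\in L^{0}(\mathbb{R}^{d})$, and the constant in Proposition \ref{prop 3} (1) is independent of $r$) are exactly what the paper uses tacitly.
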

 
 \begin{proof}
Let $f \in L^{0}(\mathbb{R}^{d})$.\\
(1) Suppose that $1\leq q_{1}\leq q_{2} \leq p \leq\infty$ and $ \phi \in \mathcal{G}_{q_{2},p}$. It follows from Proposition \ref{prop 3} that
\begin{eqnarray*}
r^{d(-\frac{1}{q_{1}}-\frac{1}{p})}\: _{r}\left\|f\right\|_{q_{1},p} \lesssim r^{d(-\frac{1}{q_{2}}-\frac{1}{p})}\: _{r}\left\|f\right\|_{q_{2},p}, 
\end{eqnarray*}
for all $r>0$.\\
Hence, $\left\| f \right\|_{(L^{q_{1}},L^{p})^{\phi }(\mathbb{R}^{d})} \lesssim \left\|f\right\|_{(L^{q_{2}},L^{p})^{\phi }(\mathbb{R}^{d})}.$  \\
(2) Suppose that $1\leq q\leq p_{1} \leq p_{2} \leq\infty$ and $ \phi \in \mathcal{G}_{q,p_{1}}$. From Proposition \ref{prop 3}, we get
\begin{eqnarray*}
r^{-\frac{d}{p_{2}}}\: _{r}\left\|f\right\|_{q,p_{2}} \lesssim r^{-\frac{d}{p_{1}}}\: _{r}\left\|f\right\|_{q,p_{1}}, 
\end{eqnarray*}
for all $r>0$. Multiplying both sides of this inequality by $\frac{1}{\phi(r)} r^{-\frac{d}{q}}$, we obtain 
$$ r^{d(-\frac{1}{q}-\frac{1}{p_{2}})}\: _{r}\left\|f\right\|_{q,p_{2}} \lesssim  r^{d(-\frac{1}{q}-\frac{1}{p_{1}})}\: _{r}\left\|f\right\|_{q,p_{1}}.$$
Hence, $\left\| f \right\|_{(L^{q},L^{p_{2}})^{\phi}(\mathbb{R}^{d})} \lesssim \left\|f\right\|_{(L^{q},L^{p_{1}})^{\phi}(\mathbb{R}^{d})}.$
\end{proof}

The following proposition asserts that the characteristic functions of balls belong to generalized Fofana spaces. 

\begin{proposition}
 Let $1\leq q \leq  p \leq \infty$ and let $ \phi \in \mathcal{G}_{q,p}. $ Then 
 $$\frac{1}{\phi(r_{0})} \approx \left\| \chi_{B(0,r_{0})} \right\|_{(L^{q},L^{p})^{\phi }(\mathbb{R}^{d})}, $$ for every ball $ B(0,r_{0}). $
 \end{proposition}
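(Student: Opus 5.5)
We work with the discrete quantity, since by Remark (3) (which relies on Proposition \ref{prop 3} (1)) it suffices to show
$$\widetilde{\left\|\chi_{B(0,r_0)}\right\|}_{(L^{q},L^{p})^{\phi}(\mathbb{R}^{d})}=\sup_{r>0}\frac{1}{\phi(r)}\,r^{-\frac{d}{q}}\,{}_{r}\widetilde{\left\|\chi_{B(0,r_0)}\right\|}_{q,p}\approx\frac{1}{\phi(r_0)}.$$
The plan is to compute ${}_{r}\widetilde{\|\chi_{B(0,r_0)}\|}_{q,p}$ up to constants in the two regimes $r\le r_0$ and $r\ge r_0$. Then we use the two monotonicity conditions defining $\mathcal{G}_{q,p}$, one in each regime, to bound the supremum by $1/\phi(r_0)$. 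The lower bound follows by choosing $r=r_0$.

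\emph{Counting cubes.} For $r\ge r_0$, the ball $B(0,r_0)$ meets at most $2^d$ cubes $Q^r_k$. Moreover $\|\chi_{B(0,r_0)}\chi_{Q^r_k}\|_q\le |B(0,r_0)|^{1/q}\approx r_0^{d/q}$, and for at least one $k$ this quantity is $\gtrsim r_0^{d/q}$, since one of the $2^d$ cubes contains a fixed fraction of the ball. Hence ${}_{r}\widetilde{\|\chi_{B(0,r_0)}\|}_{q,p}\approx r_0^{d/q}$. For $r\le r_0$, the number of cubes meeting the ball is $\approx (r_0/r)^d$, because they all lie in $B(0,r_0+\sqrt d\,r)\subset B(0,(1+\sqrt d)r_0)$. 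Each such cube contributes at most $r^{d/q}$. Hence ${}_{r}\widetilde{\|\chi_{B(0,r_0)}\|}_{q,p}\lesssim r^{d/q}(r_0/r)^{d/p}$, with the usual reading when $p=\infty$. This counting is the only slightly fiddly step. Only the upper bound is needed here, so it causes no real difficulty.

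\emph{Upper bound for the supremum.} For $r\le r_0$, the quantity inside the supremum is at most $C\,\frac{r_0^{d/p}}{r^{d/p}\phi(r)}$. Since $t\mapsto t^{d/p}\phi(t)$ is almost decreasing and $r\le r_0$, we have $r_0^{d/p}\phi(r_0)\le C r^{d/p}\phi(r)$, so the expression is $\lesssim 1/\phi(r_0)$. For $r\ge r_0$, the quantity is $\approx \frac{r_0^{d/q}}{r^{d/q}\phi(r)}$. Since $t\mapsto t^{d/q}\phi(t)$ is almost increasing, $r_0^{d/q}\phi(r_0)\le C r^{d/q}\phi(r)$, which again gives $\lesssim 1/\phi(r_0)$.

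\emph{Lower bound.} Take $r=r_0$ in the supremum. By the first regime, ${}_{r_0}\widetilde{\|\chi_{B(0,r_0)}\|}_{q,p}\gtrsim r_0^{d/q}$, so the supremum is $\gtrsim \frac{1}{\phi(r_0)}r_0^{-d/q}r_0^{d/q}=1/\phi(r_0)$. Combining this with the upper bound and Remark (3) gives the claim. All constants depend only on $d,q,p$ and the constants in the definition of $\mathcal{G}_{q,p}$.
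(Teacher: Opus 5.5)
Your proof is correct and follows the same scheme as the paper's. You split the supremum at $r=r_0$, use the almost-decreasing property of $t^{d/p}\phi(t)$ for $r\le r_0$ and the almost-increasing property of $t^{d/q}\phi(t)$ for $r\ge r_0$, and get the lower bound from the single scale $r=r_0$. The differences are only technical: you estimate the discrete cube norm (transferred via Proposition~\ref{prop 3}(1)) by counting cubes. The paper instead works with the continuous norm directly, bounding the inner integral by $\min(|B(0,r_0)|,|B(y,r)|)$ with $y$ restricted to $B(0,r+r_0)$, and obtains the lower bound through the $p=\infty$ norm via Proposition~\ref{prop 1}(2).
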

\begin{proof}
Let $ r_{0} >0. $ It follows from Proposition \ref{prop 1} (2) that 
\begin{eqnarray*}
 \frac{1}{\phi(r_{0})} r_{0}^{-\frac{d}{q}} \left[\int_{\mathbb R^{d}} \chi_{B(0,r_{0})}(x)\chi_{B(0,r_{0})}(x)dx\right]^{\frac{1}{q}} &\leq &         \left\|\chi_{B(0,r_{0})} \right\|_{(L^{q},L^{+\infty})^{\phi}(\mathbb{R}^{d})}\\
 & \lesssim & \left\|\chi_{B(0,r_{0})}\right\|_{(L^{q},L^{p})^{\phi}(\mathbb{R}^{d})}.
\end{eqnarray*}
Thus, $ \frac{1}{\phi(r_{0})} \lesssim \left\| \chi_{B(0,r_{0})} \right\|_{(L^{q},L^{p})^{\phi }(\mathbb{R}^{d})}. $\\
We will prove the converse inequality in two steps. By definition of $\left\| \cdot \right\|_{(L^{q},L^{p})^{\phi }(\mathbb{R}^{d})},$ we have 
\begin{eqnarray*}
&&\left\| \chi_{B(0,r_{0})} \right\|_{(L^{q},L^{p})^{\phi }(\mathbb{R}^{d})} \\&=& \sup_{r>0} \frac{1}{\phi(r)} r^{d(-\frac{1}{q}-\frac{1}{p})} \left\|\left[\int_{\mathbb R^{d}} \chi_{B(0,r_{0})}(x)\chi_{B(\cdot,r)}(x)dx\right]^{\frac{1}{q}}\right\|_p. 
\end{eqnarray*}
\textit{First step:} For any real number $r$ such that $ r_{0} \leq r  $, we have
\begin{equation} \label{eqq 1}
r_{0}^{\frac{d}{q}}\phi(r_{0}) \leq C_{1} r^{\frac{d}{q}}\phi(r).
\end{equation}
It follows that
\begin{eqnarray*}
&&\sup_{r \ge r_{0}} \frac{1}{\phi(r)} r^{d(-\frac{1}{q}-\frac{1}{p})} \left\|\left[\int_{\mathbb R^{d}}  \chi_{B(0,r_{0})}(x)\chi_{B(\cdot,r)}(x)dx\right]^{\frac{1}{q}}\right\|_p \\&\leq& \sup_{r \ge r_{0}} \frac{1}{\phi(r)} r^{d(-\frac{1}{q}-\frac{1}{p})} \left\|\left[\int_{\mathbb R^{d}}  \chi_{B(0,r_{0})}(x)dx\right]^{\frac{1}{q}}\chi_{B(0,r+r_{0})}\right\|_p \\&\leq& 
 2^{d(\frac{1}{q}+\frac{1}{p})}
r^{\frac{d}{q}}_{0}\sup_{r \ge r_{0}} \frac{1}{\phi(r)} r^{d(-\frac{1}{q}-\frac{1}{p})} (r+r_{0})^{\frac{d}{p}}\\
&\leq& 2^{d(\frac{1}{q}+\frac{1}{p})}r^{\frac{d}{q}}_{0}\sup_{r \ge r_{0}} \frac{1}{\phi(r)} r^{d(-\frac{1}{q}-\frac{1}{p})} r^{\frac{d}{p}}(1+\frac{r_{0}}{r})^{\frac{d}{p}} \\
&\leq& 2^{d(\frac{1}{q}+\frac{2}{p})}r^{\frac{d}{q}}_{0}\sup_{r \ge r_{0}} \frac{1}{\phi(r)} r^{d(-\frac{1}{q}-\frac{1}{p})} r^{\frac{d}{p}}.
\end{eqnarray*} 
By inequality (\ref{eqq 1}), we get 
\begin{equation}\label{eq 2}
\sup_{r \ge r_{0}} \frac{1}{\phi(r)} r^{d(-\frac{1}{q}-\frac{1}{p})} \left\|\left[\int_{\mathbb R^{d}}  \chi_{B(0,r_{0})}(x)\chi_{B(\cdot,r)}(x)dx\right]^{\frac{1}{q}}\right\|_p \leq \frac{C^{'}}{\phi(r_{0})},
\end{equation}
where $C^{'}= 2^{d(\frac{1}{q}+\frac{2}{p})}C_{1}.  $\\
\textit{Second step:} For any real number $r$ such that $ r_{0} \geq r$, we have
\begin{equation}\label{eq 4}
r_{0}^{\frac{d}{p}}\phi(r_{0}) \leq C_{2} r^{\frac{d}{p}}\phi(r).
\end{equation}

\begin{eqnarray*}
&&\sup_{r \leq r_{0}} \frac{1}{\phi(r)} r^{d(-\frac{1}{q}-\frac{1}{p})} \left\|\left[\int_{\mathbb R^{d}}  \chi_{B(0,r_{0})}(x)\chi_{B(\cdot,r)}(x)dx\right]^{\frac{1}{q}}\right\|_p \\&\leq& \sup_{r \leq r_{0}} \frac{1}{\phi(r)} r^{d(-\frac{1}{q}-\frac{1}{p})} \left\|\left[\int_{\mathbb R^{d}}  \chi_{B(y,r)}(x)dx\right]^{\frac{1}{q}}\chi_{B(0,r+r_{0})}\right\|_p. \\
 &\leq& 2^{d(\frac{1}{q}+\frac{1}{p})}
\sup_{r \leq r_{0}} \frac{1}{\phi(r)} r^{d(-\frac{1}{q}-\frac{1}{p})}r^{\frac{d}{q}} (r+r_{0})^{\frac{d}{p}}\\
&\leq& 2^{d(\frac{1}{q}+\frac{1}{p})}r_{0}^{\frac{d}{p}}\sup_{r \leq r_{0}} \frac{1}{\phi(r)} r^{d(-\frac{1}{p})} (1+\frac{r}{r_{0}})^{\frac{d}{p}} \\
&\leq& 2^{d(\frac{1}{q}+\frac{2}{p})}r^{\frac{d}{p}}_{0}\sup_{r \leq r_{0}} \frac{1}{\phi(r)} r^{-\frac{d}{p}}.
\end{eqnarray*} 
By inequality (\ref{eq 4}), we get 
\begin{equation}\label{eq 3}
\sup_{r \ge r_{0}} \frac{1}{\phi(r)} r^{d(-\frac{1}{q}-\frac{1}{p})} \left\|\left[\int_{\mathbb R^{d}}  \chi_{B(0,r_{0})}(x)\chi_{B(\cdot,r)}(x)dx\right]^{\frac{1}{q}}\right\|_p \leq \frac{C^{''}}{\phi(r_{0})}
\end{equation}
where $C^{''}= 2^{d(\frac{1}{q}+\frac{2}{p})}C_{2}.  $\\
From (\ref{eq 2}) and (\ref{eq 3}), we deduce that $$\left\| \chi_{B(0,r_{0})} \right\|_{(L^{q},L^{p})^{\phi }(\mathbb{R}^{d})} \leq \frac{C}{\phi(r_{0})},$$ where $C=\max\{C^{'},  C^{''}  \}$.
\end{proof}
\section{Boundedness of the Hardy-Littlewood maximal operator}
It is proved in \cite{JF} that the Hardy-Littlewood maximal operator is bounded on classical Fofana spaces. It is also bounded on generalized Morrey spaces (see \cite{EN}). We shall prove the following result within the framework of generalized Fofana spaces. 
\begin{theorem}\label{theo 1}
 Let $1 < q\leq p \leq \infty$ and $\phi\in \mathcal{G}_{q,p}$. Assume that there is a constant $ C > 0 $ such that for any $ r >0 $, 
  \begin{equation}\label{eqq 2}
 \displaystyle\int_{r}^{\infty} \phi^{q}(t)t^{\frac{dq}{p}-1} dt \leq C   \phi^{q}(r)r^{\frac{dq}{p}}.
  \end{equation}
  Then $$\left\Vert Mf \right\Vert_{(L^{q},L^{p})^{\phi}(\mathbb{R}^{d})} \lsim  \left\Vert f  \right\Vert_{(L^{q},L^{p})^{\phi}(\mathbb{R}^{d})}, \quad f \in (L^{q},L^{p})^{\phi}(\mathbb{R}^{d}).$$
\end{theorem}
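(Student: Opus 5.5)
Fix $r>0$. We work with the discrete norm $\widetilde{\|\cdot\|}$, which is equivalent to the original one by the Remark following the definition. It therefore suffices to show
$$\frac{1}{\phi(r)}\,r^{-\frac dq}\;{}_{r}\widetilde{\|Mf\|}_{q,p}\lsim \widetilde{\|f\|}_{(L^q,L^p)^{\phi}(\mathbb{R}^d)}.$$
For each cube $Q^r_k$, let $\widetilde Q_k=Q^r_k\cup\bigcup_{|j-k|_\infty\le 1}Q^r_j$ be the union of $3^d$ neighbouring cubes. Split $f=f_1+f_2$ with $f_1=f\chi_{\widetilde Q_k}$ and $f_2=f\chi_{\mathbb{R}^d\setminus\widetilde Q_k}$, so that $Mf\le Mf_1+Mf_2$ on $Q^r_k$.

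\emph{Local part.} Since $q>1$, $M$ is bounded on $L^q$. Hence
$$\|(Mf_1)\chi_{Q^r_k}\|_q\lsim\|f\chi_{\widetilde Q_k}\|_q\le\sum_{|j-k|_\infty\le1}\|f\chi_{Q^r_j}\|_q .$$
Taking the $\ell^p$ norm in $k$ and using that there are finitely many shifts, we get a bound by $3^d\,{}_{r}\widetilde{\|f\|}_{q,p}$. This term is therefore controlled directly.

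\emph{Nonlocal part.} For $x\in Q^r_k$, only balls of radius $t\gtrsim r$ can reach the support of $f_2$. Such a ball $B(x,t)$ is contained in the cube $Q(x_k,ct)$ centred at the centre $x_k$ of $Q^r_k$. Hence, uniformly for $x\in Q^r_k$,
$$Mf_2(x)\lsim\sup_{t\ge r}t^{-d}\int_{Q(x_k,ct)}|f|\lsim\sup_{t\ge r}t^{-\frac dq}\|f\chi_{Q(x_k,ct)}\|_q .$$
This gives $\|(Mf_2)\chi_{Q^r_k}\|_q\lsim r^{\frac dq}\sup_{t\ge r}t^{-\frac dq}\|f\chi_{Q(x_k,ct)}\|_q$.

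We discretise $t$ dyadically, $t\approx 2^jr$ with $j\ge0$, and replace the sup by a sum over $j$. We then cover $Q(x_k,c2^jr)$ by cubes of the grid of size $2^jr$. The main obstacle is to take the $\ell^p$ norm in $k$ of $\|f\chi_{Q(x_k,c2^jr)}\|_q$. Each cube $Q^{2^jr}_m$ at scale $2^jr$ contains about $2^{jd}$ indices $k$, so
$$\Big\|\big\{\|f\chi_{Q(x_k,c2^jr)}\|_q\big\}_k\Big\|_{\ell^p}\lsim 2^{\frac{jd}{p}}\;{}_{2^jr}\widetilde{\|f\|}_{q,p}\le 2^{\frac{jd}{p}}(2^jr)^{\frac dq}\phi(2^jr)\,\widetilde{\|f\|}_{(L^q,L^p)^\phi}.$$
Multiplying by $r^{\frac dq}(2^jr)^{-\frac dq}/(\phi(r)r^{\frac dq})$, the $j$-th term of the normalized quantity is bounded by
$$\frac{(2^jr)^{\frac dp}\phi(2^jr)}{r^{\frac dp}\phi(r)}\;\widetilde{\|f\|}_{(L^q,L^p)^{\phi}}.$$

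It remains to show
$$\sum_{j\ge0}(2^jr)^{\frac dp}\phi(2^jr)\lsim r^{\frac dp}\phi(r),$$
and this is where hypothesis (\ref{eqq 2}) enters. By the doubling property (\ref{eq 1}), each term satisfies
$$\big((2^jr)^{\frac dp}\phi(2^jr)\big)^q\approx\int_{2^{j-1}r}^{2^jr}\phi^q(t)\,t^{\frac{dq}{p}-1}\,dt$$
(for $j=0$ apply this with $r/2$). Summing over $j$ and using (\ref{eqq 2}) at $r/2$ together with doubling, we obtain
$$\sum_{j}\big((2^jr)^{\frac dp}\phi(2^jr)\big)^q\lsim \big(r^{\frac dp}\phi(r)\big)^q,$$
which is an $\ell^q$ bound. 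To get the needed $\ell^1$ bound, we instead use (\ref{eqq 2}) to derive geometric decay: the standard argument gives $s^{\frac dp}\phi(s)\lsim (r/s)^{\varepsilon}r^{\frac dp}\phi(r)$ for some $\varepsilon>0$ and all $s\ge r$. This is obtained by iterating the integral condition together with the almost decreasing property of $t^{\frac dp}\phi(t)$. Geometric decay makes the sum converge, which completes the proof.

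Alternatively, one can avoid the sup-to-sum loss altogether by applying the $\ell^p$ norm to the supremum through a pointwise domination, but the decay route is the one we would try first.
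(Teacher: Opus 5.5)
Your proof is correct, but it is organized differently from the paper's. The paper never splits $f$. It uses the Fefferman--Stein inequality $\int (Mf)^q\chi_{B(y,r)}\lesssim\int |f|^q\,M\chi_{B(y,r)}$ and the decay of $M\chi_{B(y,r)}$ outside $B(y,2r)$. It then applies, implicitly, Minkowski's inequality in $L^{p/q}$ (which needs $q\le p$) to get ${}_{r}\|Mf\|^q_{q,p}\lesssim {}_{2r}\|f\|^q_{q,p}+\sum_{i\ge1}2^{-(i-1)d}\,{}_{2^{i+1}r}\|f\|^q_{q,p}$.

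Because everything is estimated at the level of $q$-th powers, the series that appears is $\sum_i (2^ir)^{dq/p}\phi^q(2^{i+1}r)$. This is just a dyadic discretization of the integral in (\ref{eqq 2}), and Lemma~\ref{lem 1} converts it into that integral. So the hypothesis is used directly, and only the $\ell^q$-type bound you derived is ever needed.

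Your local/nonlocal splitting relies instead on three lighter tools: the $L^q$-boundedness of $M$, H\"older's inequality, and grid counting in the discrete amalgam norm. It also covers $p=\infty$ in the same argument, whereas the paper defers that case to Nakai. The price is that your sup-to-sum step is done on first powers. You therefore need the $\ell^1$ bound $\sum_{j\ge0}(2^jr)^{d/p}\phi(2^jr)\lesssim r^{d/p}\phi(r)$, and hence an additional decay lemma. The weighted inequality is exactly what lets the paper avoid the sup-to-sum loss you mention in your last sentence.

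The decay lemma is the one step you only sketch, so write it out; it is short. Put $g(t)=(t^{d/p}\phi(t))^q$ and $G(r)=\int_r^\infty g(t)\,dt/t$, so that (\ref{eqq 2}) reads $G(r)\le C\,g(r)$.

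Since $g$ is almost decreasing, $\int_r^{2r}g(t)\,dt/t\ge c\,g(2r)\ge (c/C)\,G(2r)$. Hence $G(r)\ge(1+c/C)\,G(2r)$, that is, $G(2r)\le\theta\,G(r)$ with $\theta=(1+c/C)^{-1}<1$.

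Iterating, and using $c\,g(2^{j+1}r)\le\int_{2^jr}^{2^{j+1}r}g(t)\,dt/t\le G(2^jr)\le\theta^jG(r)\le C\theta^j g(r)$, you get $(2^jr)^{d/p}\phi(2^jr)\lesssim\theta^{j/q}\,r^{d/p}\phi(r)$, which is summable.

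A by-product worth noting: for $\phi\in\mathcal{G}_{q,p}$, this argument shows that (\ref{eqq 2}) is equivalent to the analogous condition with $\phi^q(t)t^{dq/p}$ replaced by $(t^{d/p}\phi(t))^s$, for any $s>0$. So the exponent $q$ in the hypothesis plays no essential role.
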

\begin{remark}
Theorem \ref{theo 1} in the case $1<q \leq \alpha \leq  p\leq \infty$ and $\phi (r) = r^{-\frac{d}{\alpha}}$ was proved in   \cite[Proposition 4.2 (1)]{JF}.  In the case $p=\infty$, it is nothing but Theorem 1 of \cite{EN}. \label{R10}
\end{remark}

The following result will be useful in the proof of Theorem \ref{theo 1}.

\begin{lemma}\label{lem 1}
Suppose that $1 < q\leq p \leq \infty $, $r >0$ and $\phi \in \mathcal{G}_{q,p}$.  Then, there exists a positive constant $ C $ such that for any positive integer $i$, we have 
$$  (2^{i}r)^{\frac{dq}{p}} \phi^{q}(2^{i+1}r) \leq C \displaystyle\int_{2^{i}r}^{2^{i+1}r} \phi^{q}(t)t^{\frac{dq}{p}-1} dt .$$
\end{lemma}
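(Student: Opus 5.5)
The plan is to compare the integrand on the dyadic interval $[2^{i}r,\,2^{i+1}r]$ with its value at the right endpoint, using only the doubling condition (\ref{eq 1}) satisfied by $\phi\in\mathcal{G}_{q,p}$. The integral is then bounded below by the length of the interval times a pointwise lower bound of the integrand, and this lower bound is comparable to $(2^{i}r)^{\frac{dq}{p}-1}\phi^{q}(2^{i+1}r)$.

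\emph{Lower bound for $\phi$.} For $t\in[2^{i}r,2^{i+1}r]$ we have $\frac12\le \frac{t}{2^{i+1}r}\le 1$. The doubling condition (\ref{eq 1}) then gives $\phi(t)\ge C_0^{-1}\phi(2^{i+1}r)$, where $C_0$ depends only on $\phi$. Hence
$$\phi^{q}(t)\ge C_0^{-q}\phi^{q}(2^{i+1}r),\qquad t\in[2^{i}r,2^{i+1}r].$$
Alternatively, this can be obtained directly from the two almost-monotonicity conditions defining $\mathcal{G}_{q,p}$, at the cost of a factor $2^{d/q}$ or $2^{d/p}$.

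\emph{Lower bound for the power of $t$.} The exponent $\frac{dq}{p}-1$ may have either sign, since $\frac{dq}{p}\ge0$ but it can be smaller than $1$. In both cases, for $t\in[2^{i}r,2^{i+1}r]$,
$$t^{\frac{dq}{p}-1}\ge \min\bigl(1,\,2^{\frac{dq}{p}-1}\bigr)\,(2^{i}r)^{\frac{dq}{p}-1}.$$
If the exponent is nonnegative, we use $t\ge 2^{i}r$. If it is negative, we use $t\le 2^{i+1}r$, which yields the factor $2^{\frac{dq}{p}-1}\ge \frac12$. For $p=\infty$ the exponent is simply $-1$.

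\emph{Integration.} Integrating over an interval of length $2^{i}r$ gives
$$\int_{2^{i}r}^{2^{i+1}r}\phi^{q}(t)t^{\frac{dq}{p}-1}\,dt\ \ge\ c\,(2^{i}r)^{\frac{dq}{p}}\,\phi^{q}(2^{i+1}r),$$
with $c=C_0^{-q}\min\bigl(1,2^{\frac{dq}{p}-1}\bigr)$. Taking $C=c^{-1}$ proves the lemma, and $C$ is independent of $i$ and $r$.

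There is no real obstacle here. The only care needed is handling the sign of $\frac{dq}{p}-1$ and checking that the constant from (\ref{eq 1}) is uniform; both are immediate.
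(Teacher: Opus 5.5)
Your proof is correct and follows the paper's argument: the doubling condition (\ref{eq 1}) gives $\phi^{q}(t)\gtrsim\phi^{q}(2^{i+1}r)$ on $[2^{i}r,2^{i+1}r]$, and the remaining integral of $t^{\frac{dq}{p}-1}$ is then bounded below by a multiple of $(2^{i}r)^{\frac{dq}{p}}$. The only cosmetic difference is that the paper avoids your case split on the sign of $\frac{dq}{p}-1$. It writes $t^{\frac{dq}{p}-1}=t^{\frac{dq}{p}}\cdot t^{-1}$, uses $t^{\frac{dq}{p}}\ge(2^{i}r)^{\frac{dq}{p}}$, and evaluates $\int_{2^{i}r}^{2^{i+1}r}\frac{dt}{t}=\ln 2$.
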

\begin{proof}
Suppose  that $1 < q\leq p \leq \infty$ and $\phi \in \mathcal{G}_{q,p}$. Let  $r >0$. We fix $i \geq 1$.
\\ For $ 2^{i}r \leq t <  2^{i+1}r $, we have  $  C^{'} \phi^{q}(2^{i+1}r)  \leq  \phi^{q}(t)$ according to (\ref{eq 1}). It follows that
\begin{eqnarray*}
\displaystyle\int_{2^{i}r}^{2^{i+1}r} \phi^{q}(t)t^{\frac{dq}{p}-1} dt &\ge & C^{'} \phi^{q}(2^{i+1}r)\displaystyle\int_{2^{i}r}^{2^{i+1}r} t^{\frac{dq}{p}}\frac{1}{t} dt \\ &\geq & C^{'}ln(2) (2^{i}r)^{\frac{dq}{p}}\phi^{q}(2^{i+1}r).
\end{eqnarray*}
Thus, $\quad  (2^{i}r)^{\frac{dq}{p}} \phi^{q}(2^{i+1}r) \leq C \displaystyle\int_{2^{i}r}^{2^{i+1}r} \phi^{q}(t)t^{\frac{dq}{p}-1} dt$\\ with $C=\frac{1}{C^{'}ln(2)}$.  
\end{proof}
We may now establish the proof of Theorem \ref{theo 1}.
\begin{proof}[Proof of Theorem \ref{theo 1}]
Let $1 < q\leq p \leq \infty$, $\phi\in \mathcal{G}_{q,p}$ and $f \in (L^{q},L^{p})^{\phi}(\mathbb{R}^{d}).$\\
Since the desired result is already established in the case $p=\infty$ (see Remark \ref{R10} ), we suppose that $ p < \infty $.\\
Let $r>0$ and $y \in \mathbb{R}^{d}$. Following \cite[Theorem 2.12]{GF}, we may write
\begin{eqnarray}\label{rel 8}
\int_{\mathbb R^{d}} (Mf)^{q}(x) \chi_{B(y,r)}(x) dx \lsim \int_{\mathbb R^{d}} \vert f(x) \vert^{q} (M \chi_{B(y,r)})(x)dx.
\end{eqnarray}
Then, proceeding as in the proof of Theorem 1 in \cite{CF}, we obtain
$$_{r}\left\|Mf\right\|^{q}_{q,p} \lsim \: _{2r}\left\|f\right\|^{q}_{q,p} +  \sum^{\infty}_{i=1} \frac{1}{(2^{i-1})^{d}}  \: _{2^{i+1}r}\left\|f\right\|^{q}_{q,p}.$$
On the one hand, we have
\begin{eqnarray*}
_{2r}\left\|f\right\|^{q}_{q,p} & = & \frac{\phi^{q}(2r)(2r)^{dq(-\frac{1}{q}-\frac{1}{p})}}{\phi^{q}(2r)(2r)^{dq(-\frac{1}{q}-\frac{1}{p})}}   \: _{2r}\left\|f\right\|^{q}_{q,p}\\
&\leq& \frac{\phi^{q}(2r)}{(2r)^{dq(-\frac{1}{q}-\frac{1}{p})}}\left\Vert f \right\Vert^{q}_{(L^{q},L^{p})^{\phi }(\mathbb{R}^{d})}\\
&\leq& C \phi^{q}(r) r^{dq(\frac{1}{q}+\frac{1}{p})} \left\Vert f \right\Vert^{q}_{(L^{q},L^{p})^{\phi }(\mathbb{R}^{d})}.
\end{eqnarray*}
Hence,
\begin{equation}\label{rel 1}
_{2r}\left\|f\right\|^{q}_{q,p} \lsim \phi^{q}(r) r^{dq(\frac{1}{q}+\frac{1}{p})} \left\Vert f \right\Vert^{q}_{(L^{q},L^{p})^{\phi }(\mathbb{R}^{d})}.
\end{equation}
On the other hand, thanks to (\ref{rel 1}), we get
\begin{eqnarray*}
& &\sum^{\infty}_{i=1} \frac{1}{(2^{i-1})^{d}}  \: _{2^{i+1}r}\left\|f\right\|^{q}_{q,p} \\
&\lsim & \sum^{\infty}_{i=1} \frac{1}{(2^{i-1})^{d}} \phi^{q}(2^{i+1}r) (2^{i+1}r)^{dq(\frac{1}{q}+\frac{1}{p})} \left\Vert f \right\Vert^{q}_{(L^{q},L^{p})^{\phi }(\mathbb{R}^{d})}.  
\end{eqnarray*}
It follows that
\begin{eqnarray*}
& &\sum^{\infty}_{i=1} \frac{1}{(2^{i-1})^{d}}  \: _{2^{i+1}r}\left\|f\right\|^{q}_{q,p} \\
&\lsim &\left\Vert f \right\Vert^{q}_{(L^{q},L^{p})^{\phi }(\mathbb{R}^{d})}r^{dq(\frac{1}{q}+\frac{1}{p})}  \sum^{\infty}_{i=1} 2^{\frac{idq}{p}} \phi^{q}(2^{i+1}r)  \\
&\lsim &\left\Vert f \right\Vert^{q}_{(L^{q},L^{p})^{\phi }(\mathbb{R}^{d})}r^{dq(\frac{1}{q}+\frac{1}{p})} r^{-\frac{dq}{p}} \sum^{\infty}_{i=1} (2^{i}r)^{\frac{dq}{p}} \phi^{q}(2^{i+1}r) .
\end{eqnarray*}
By applying Lemma \ref{lem 1}, the previous inequality becomes
\begin{eqnarray*}
&&\sum^{\infty}_{i=1} \frac{1}{(2^{i-1})^{d}}  \: _{2^{i+1}r}\left\|f\right\|^{q}_{q,p} \\
&\lsim & \left\Vert f \right\Vert^{q}_{(L^{q},L^{p})^{\phi }(\mathbb{R}^{d})}r^{dq(\frac{1}{q}+\frac{1}{p})} r^{-\frac{dq}{p}} \sum^{\infty}_{i=1} \displaystyle\int_{2^{i}r}^{2^{i+1}r} \phi^{q}(t)t^{\frac{dq}{p}-1} dt. 
\end{eqnarray*}
Since $$\sum\limits^{\infty}_{i=1} \displaystyle\int_{2^{i}r}^{2^{i+1}r} \phi^{q}(t)t^{\frac{dq}{p}-1} dt \leq  \displaystyle\int_{2r}^{\infty} \phi^{q}(t)t^{\frac{dq}{p}-1} dt \leq \displaystyle\int_{r}^{\infty} \phi^{q}(t)t^{\frac{dq}{p}-1} dt, $$
we get $$\sum^{\infty}_{i=1} \frac{1}{(2^{i-1})^{d}}  \: _{2^{i+1}r}\left\|f\right\|^{q}_{q,p} \lsim  \left\Vert f \right\Vert^{q}_{(L^{q},L^{p})^{\phi }(\mathbb{R}^{d})}r^{dq(\frac{1}{q}+\frac{1}{p})} r^{-\frac{dq}{p}} \displaystyle\int_{r}^{\infty} \phi^{q}(t)t^{\frac{dq}{p}-1} dt.  $$
It follows from (\ref{eqq 2}) that, 
\begin{equation}\label{rel 2}
\sum^{\infty}_{i=1} \frac{1}{(2^{i-1})^{d}}  \: _{2^{i+1}r}\left\|f\right\|^{q}_{q,p} \lsim \left\Vert f \right\Vert^{q}_{(L^{q},L^{p})^{\phi }(\mathbb{R}^{d})}r^{dq(\frac{1}{q}+\frac{1}{p})} \phi^{q}(r).
\end{equation}
From (\ref{rel 1}) and (\ref{rel 2}),  we deduce that $$_{r}\left\|Mf\right\|^{q}_{q,p} \lsim \left\Vert f \right\Vert^{q}_{(L^{q},L^{p})^{\phi }(\mathbb{R}^{d})}r^{dq(\frac{1}{q}+\frac{1}{p})} \phi^{q}(r).$$
It follows that, 
\begin{equation} \label{eqq 3} 
\frac{1}{\phi^{q}(r)}r^{dq(-\frac{1}{q}-\frac{1}{p})} \: _{r}\left\|Mf\right\|^{q}_{q,p} \lsim \left\Vert f \right\Vert^{q}_{(L^{q},L^{p})^{\phi }(\mathbb{R}^{d})}.
\end{equation}
We obtain the desired result by taking the supremum over all $ r > 0 $ in
the left hand side of (\ref{eqq 3}).
\end{proof}

\end{document}